\title{Quotients of Nash Manifolds by Equivalence Relations}
\begin{document}
\date{\today}
\author[A. Aizenbud]{Avraham Aizenbud}
\address{Avraham Aizenbud,
Faculty of Mathematics and Computer Science, Weizmann
Institute of Science, POB 26, Rehovot 76100, Israel }
\email{aizenr@gmail.com}
\urladdr{http://www.wisdom.weizmann.ac.il/~aizenr}

\author[S. Carmeli]{Shachar Carmeli}

\address{Shachar Carmeli, Department of Mathematical Sciences, University of Copenhagen, Universitetsparken 5
DK-2100 Copenhagen Ø, Denmark}

\author[D. Gourevitch]{Dmitry Gourevitch}
\address{Dmitry Gourevitch, Faculty of Mathematics and Computer Science, Weizmann
Institute of Science, POB 26, Rehovot 76100, Israel }
\email{dimagur@weizmann.ac.il}
\urladdr{http://www.wisdom.weizmann.ac.il/~dimagur}

\author[L. Radzivilovski]{Lev Radzivilovski}
\address{Lev Radzivilovski, Department of Mathematics, Tel Aviv University, P.O. Box 39040, Tel Aviv 6997801, Israel}

\maketitle

\begin{abstract}
We prove that a quotient of a Nash manifold $X$ by a closed equivalence relation $R\subset X\times X$, which is submersive over $X$, yields a Nash manifold $X/R$.
\end{abstract}

\tableofcontents


\section{Introduction}
\label{sec:introduction}
\subsection{Main Results}
The main result of this paper is the following theorem:
\begin{introthm}[See \S\ref{sssec:reg.rel} below]\label{thm:main}
Let $X$ be a Nash manifold\footnote{i.e. infinitely smooth semi-algebraic manifolds. See e.g. \cite{Shiota,aizenbud2008schwartz} for more details.} and let $R\subseteq X\times X$ be an equivalence relation on $X$. Assume that $R$ is a closed, Nash submanifold of $X\times X$, and that the projection $R\to X$ is a Nash submersion. Then, the quotient  $X/R$ exists in the category of Nash manifolds, the quotient morphism $X\onto X/R$ is a Nash submersion, and the canonical morphism 
$R\to X\times_{X/R} X$ is a Nash diffeomorphism.   
\end{introthm}

Traditionally, Nash manifolds are required to be Hausdorff, so the requirement that $R$ is closed is necessary. However one can redefine the notion of Nash manifolds without the Hausdorfness assumption and in this case the requirement that $R$ is closed is not necessary in general. We do not know whether  \Cref{thm:main} holds when dropping the closedness assumption in this case.

In order to prove \Cref{thm:main} we use the notion of complexity for semi-algebraic subsets in $\R^n$ and prove basic properties of it. Roughly speaking, we show that any algebraic procedure cannot increase the complexity in an uncontrollable way (see \Cref{ssec:Tame_Functions}), and that the degree of the  Zariski closure of a set with a bounded complexity is itself bounded (see \Cref{degree_pol_vanishing_controlled}).

\subsection{Background and Motivation}
One basic construction in geometry is the quotient of a space by an equivalence relation. Many examples of geometric objects, such as the projective space, the Grassmanian, moduli spaces, e.t.c. are described as such quotients. In general, one can describe algebraic varieties as quotients of affine algebraic varieties by certain type of equivalence relations.

However, even a quotient of a nice geometric object by an equivalence relation can be very wild. Hence, to guarantee that the quotient is well-behaved, one has to impose conditions on the equivalence relation.
One basic example is the formation of quotients in the category of Hausdorff topological spaces. A quotient of a  Hausdorff topological space by an equivalence relation always exists as a topological space, but it is Hausdorff if and only if the equivalence relation is \emph{closed}.

In the category of $C^\infty$-manifolds, one has the following criterion for the existence of quotients: 

\begin{theorem}[{{\cite[\S 5.9.5]{bourbaki2007varietes}}}]\label{thm:deff}
A quotient of a  $C^\infty$-manifold $X$ by  an equivalence relation $R$ is a  $C^\infty$-manifold if the projection  $R\to X$ is a submersion \footnote{usually one requires the Hausdorff property from $C^\infty$-manifolds. In this case one has also to require that $R$ is closed}.
\end{theorem}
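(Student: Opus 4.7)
The plan is to produce smooth charts on $X/R$ from transverse slices in $X$, using the submersion hypothesis via the implicit function theorem. Write $p_1,p_2:R\to X$ for the two projections. Since the involution $(x,y)\mapsto(y,x)$ preserves $R$, the assumption that $p_1$ is a submersion implies the same for $p_2$, so each equivalence class $[x]=p_2(p_1^{-1}(x))$ is locally an embedded submanifold of $X$ of dimension $d:=\dim R-\dim X$.

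Fix $x_0\in X$ and choose a submanifold $S\ni x_0$ with $T_{x_0}S\oplus T_{x_0}[x_0]=T_{x_0}X$, so that $\dim S=\dim X-d$. The preimage $R_S:=p_1^{-1}(S)\subseteq R$ is a transverse submanifold of dimension $\dim X$, and a tangent-space computation --- $dp_2|_{R_S}$ hits $T_{x_0}S$ via diagonal vectors from $T\Delta_X\subseteq TR$ and $T_{x_0}[x_0]$ via $\ker dp_1|_{R_S}$ --- shows $p_2|_{R_S}$ is \'etale at $(x_0,x_0)$. The inverse function theorem then yields an open neighbourhood $V$ of $(x_0,x_0)$ in $R_S$ such that $p_2|_V$ is a diffeomorphism onto an open $U\subseteq X$, giving a smooth retraction $\rho:=p_1\circ(p_2|_V)^{-1}:U\to S\cap U$ with $\rho(y)\sim y$ for all $y\in U$. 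After further shrinking, a dimension count combined with $\Delta_{S\cap U}\subseteq R\cap((S\cap U)^2)$ yields the local identity $R\cap((S\cap U)^2)=\Delta_{S\cap U}$, so distinct points of $S\cap U$ are inequivalent, and $\rho$ realises $S\cap U$ as a genuine local transversal for $R$.

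To assemble the global structure I declare each slice $S\cap U$ to be a chart for $X/R$ near the class of $x_0$, equip $X/R$ with the quotient topology, and compute transition maps: for two slice data $(S_i,U_i)_{i=0,1}$, the transition from $S_0$ to $S_1$ is obtained by following an equivalence class from $S_0$ into $U_1$ and applying $\rho_1$, which is smooth by the \'etale statement above. The projection $\pi:X\to X/R$ is open (each $\rho$ is open), so the chart topologies agree with the quotient topology, and $\pi$ is a submersion because it coincides with $\rho$ in charts. The canonical map $R\to X\times_{X/R}X$ is a bijection by definition of the equivalence relation and becomes a diffeomorphism via the same slice description applied to both factors.

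The principal technical obstacle is the slice step: the inverse function theorem delivers \'etaleness of $p_2|_{R_S}$ only at the base point $(x_0,x_0)$, so the local identity $R\cap((S\cap U)^2)=\Delta_{S\cap U}$ on a full neighbourhood must be extracted by a dimension argument, which crucially uses the submersion hypothesis. If one additionally requires $X/R$ to be Hausdorff, then one imposes that $R$ be closed in $X\times X$; the Hausdorff property of the quotient then follows from the classical Godement criterion.
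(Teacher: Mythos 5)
The paper does not prove this theorem; it cites it directly from Bourbaki, \emph{Vari\'et\'es diff\'erentielles et analytiques}, \S 5.9.5, as background for the Nash-manifold setting that is the paper's actual concern. Your sketch reconstructs the standard Godement-type argument used there: pass to a transversal $S$ at $x_0$, observe that $p_2\colon p_1^{-1}(S)\to X$ is \'etale at $(x_0,x_0)$ (the tangent computation you give, $dp_2$ hitting $T_{x_0}S$ via $T\Delta_X$ and $T_{x_0}[x_0]$ via $\ker dp_1$, is the right one), invert to produce a local retraction $\rho$, and shrink so that $R\cap(S\cap U)^2=\Delta_{S\cap U}$. Your dimension count for that last step is correct: $S\times S$ is transverse to $R$ at $(x_0,x_0)$ (one checks $T\Delta_X+(T_{x_0}S\oplus T_{x_0}S)+(0\oplus T_{x_0}R_{x_0})=T_{x_0}X\oplus T_{x_0}X$ using $T_{x_0}S+T_{x_0}R_{x_0}=T_{x_0}X$), so $R\cap(S\times S)$ has dimension $\dim S$ near the base point and hence coincides with the diagonal there. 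The assembly of charts, smoothness of transitions via $\rho$, openness of $\pi$, and the diffeomorphism $R\iso X\times_{X/R}X$ all follow the same route as the cited reference. Two minor points worth tightening if you were to write this out in full: (i) the equivalence classes $[x]$ are in general only injectively immersed (via $p_2|_{p_1^{-1}(x)}$), not embedded, so ``locally an embedded submanifold'' should be read as a statement about a neighbourhood of each point of the class; (ii) in the transition-map step one should be explicit that the map is $\rho_1$ composed with the inclusion $S_0\cap U_1\into U_1$, restricted to the locus where the two slice charts overlap, and that this locus is open --- your phrasing ``following an equivalence class from $S_0$ into $U_1$'' glosses over a small shrinking. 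Neither affects the correctness of the overall argument.
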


The immediate analog of the above criterion does not hold for algebraic varieties (or schemes).  This fact is the main motivation for introducing algebraic spaces. Namely, one can define an algebraic space as the quotient of a scheme $X$ by an equivalence relation $R$ for which the projection $R\to X$ is a smooth morphism (i.e., a submersion). Then, the natural analog of \Cref{thm:deff}  does hold for algebraic spaces. 

The main difference between the Nash setting of \Cref{thm:main} and the $C^\infty$-setting of \Cref{thm:deff} is that a Nash manifold is required to have a \emph{finite} cover by affine open submanifolds. Indeed, as in \Cref{thm:deff}, it is easy to construct an infinite open cover of the quotient $X/R$ by Nash manifolds, and the main difficulty is to present a construction which gives a finite cover.  


\subsection{Idea of the proof of Theorem \ref{thm:main}}
In the case when $R\to X$ is \`{e}tale, Theorem \ref{thm:main} can be deduced from the considerations in \cite{bakker2018minimal}. 
 It is therefore enough to show that one can cover $X/R$ by finitely many open sets, each of which is a quotient of a Nash manifold $Y_i$ by an equivalence relation $R_i$ whose projection to $Y_i$ is \'{e}tale 

We do this by introducing the notion of \emph{\'{e}tale slice} (See Definition \ref{def:slice} below). An \'{e}tale slice to $R$ is a submanifold $S\subset X$ such that the restriction $R|_S$ of $R$ to $S$ is \'{e}tale over $S$ and the quotient $S/{R|_S}$ is open in $X/R$. 
Now, it is enough to find finitely many (Nash)  \'{e}tale slices to $R$ whose union covers $X$ modulo $R$ (see \Cref{prop:slis.reg}). Standard technique of differential topology allows to find such an \emph{infinite} collection of slices. It is easy to ensure that those slices will be Nash, but it is more tricky to ensure that their number is finite.

We may assume that $X=\RR^n$. Our construction of the slices is the following: For any $p\in X$ we construct a slice that we denote by $V(R,p)$. Roughly speaking, $V(R,p)$ consists of foots of perpendiculars from $p$ to equivalence classes of $R$, satisfying a suitable transversality condition (see Definition \ref{defn:VRp}).
It is easy to see that, when we range over all $p\in X$, the slices $V(R,p)$ form a collection of slices as required, except of being an infinite collection.

We next show that if we choose a large enough amount of points $p$ in general position, we already cover all of $X$ (modulo $R$). For this, we introduce the notion of complexity of a semi-algebraic set (See \S\ref{ssec:comp} below). We show that the set of points $p$ for which an equivalence class $R_x$ does not meet $V(R,p)$ is of bounded complexity (see \Cref{sec:focal.points}). We then show that for every integer $C$, there exist a finite set $\Sigma$ of points such that every subset of complexity at most $C$ misses at least one point of $\Sigma$ (see \Cref{avoid_small_complexity}).
\subsection{Possible  Extensions and Questions}
Theorem \Cref{thm:closed_submersive_then_regular}  requires that the equivalence relation $R$ is closed. This naturally raises the following question: 

\begin{question}
Let $X$ be a Nash manifold and $R$ a submersive, not necessarily closed equivalence relation on $X$. Does the quotient $X/R$ exist in the category of (non-Hausdorff) Nash manifolds?
\end{question}

Semi-algebraic sets form an $o$-minimal structure.
\begin{question}
Let $X$ be a smooth manifold definable in a given $o$-minimal structure, and let $R$ be a definable submersive closed equivalence relation on $X$. Does the quotient $X/R$ exist in the category of definable smooth manifolds for this structure?
\end{question}
The answer for this question is positive when $R$ is an \'{e}tale equivalence relation, as shown for complex analytic spaces in \cite[Corollary 2.58]{bakker2018minimal}, and the proof works also in the $C^\infty$-settings. 

\subsection{Motivation and Possible Applications}
This paper is motivated by an attempt to better understand the notion of Nash stacks. The notion of algebraic stacks can be defined using groupoid objects in algebraic spaces. In turn, algebraic spaces can be defined as quotients of schemes by submersive (a.k.a. smooth) equivalence relations.  
On the other hand, in differential topology, differentiable stacks are usually defined directly using groupoid objects in smooth manifolds. This discrepancy is justified by the fact that smooth manifolds admit quotients by closed\footnote{in this context, one usually excludes non-Hausdorff manifolds and the equivalence relation is forced to be closed.} submersive equivalence relations.  

In the Nash context, the differential topology convention is taken in \cite{sakellaridis} for the definition of Nash stacks. The present paper similarly serves as a justification for this choice. We thus expect that the results of this paper can be used to show that a more general definition of Nash stacks, in the flavor of algebraic geometry, is equivalent to the one in \cite{sakellaridis} (at least under suitable separability assumptions). 




\subsection{Acknowledgements}
We thank Yiannis Sakellaridis for many interesting discussions on Nash stacks which led us to think about this problem.  
We thank Gal Binyamini for referring us to the relevant sources in semi-algebraic complexity theory. 
A.A. and D.G. were partially supported by ISF grant 249/17.
A.A. was also partially supported by a Minerva foundation grant.
S.C. was partially supported by the Danish National Research Foundation
through the Copenhagen Centre for Geometry and Topology (DNRF151).



\section{Preliminaries from Semi-algebraic Geometry}
In this section we review and expand the basics of semi-algebraic geometry that we shall need. Mainly, we introduce the notion of \emph{complexity} of a semi-algebraic set, closely related to that of description complexity. We then study focal points of embedded Nash manifolds and their complexity.  

\subsection{Complexity and Tameness}\label{ssec:comp}
We shall now study two related notions, that of complexity of a semialgebraic set (or morphism), and the notion of tame function between collections of semi-algebraic sets. The philosophy is that complexity can not grow unwieldy under operations on semi-algebraic sets that can be described in terms of finitely many polynomial procedures.  
\subsubsection{Complexity}
\begin{defn}
Let $X$ be a semialgebraic set and let $N>0$.
\begin{itemize}
    \item We say that $X$ is \tdef{$N$-basic} if it can be written as 
\[   
    \{x\in \RR^n : g_1(x)=0,\dots,g_m(x)=0,g_{m+1}(x)>0,\dots,g_N(x)>0\}
\]
such that the $g_i$-s are polynomials in $n\le N$ variables of degree at most $N$. 
    \item We say that $X$ is of \tdef{complexity $\le N$} if it can be written as a union of at most $N$ sets which are $N$-basic.
    \item We say that $X$ is of \tdef{complexity $N$} if it is of complexity $\le N$ but not of complexity $\le N-1$.
\end{itemize}
\end{defn}

We denote the complexity of $X$ by $\Cpl{X}$. Generally, we regard semi-algebraic maps, or more generally, correspondences, as semi-algebraic sets via their graphs. In particular, we define the complexity of a semi-algebraic morphism $X\to Y$ (or, more generally, of a semi-algebraic correspondence) between semi-algebraic sets $X$ and $Y$, to be the complexity of the graph of the morphism (or correspondence).

\subsubsection{Tame Functions} \label{ssec:Tame_Functions}
One advantage of complexity is that it can not behave wildly under reasonable operations on semi-algebraic sets. We shall formalize this philosophy via the notion of \emph{tame functions}. Let $\Salg$ be the collection of semi-algebraic sets.  
\begin{defn}{\label{def:tame}}
 Let $S\subseteq \Salg^k$ be a set of $k$-tuples of semi-algebraic sets. 
 \begin{enumerate}
     \item A function $\varphi\colon S \to \RR_{\ge 0}$ is called \tdef{tame} if for every tuple of natural numbers $N_1,\dots,N_k$ there is $N>0$ such that for every $(X_1,\dots,X_k) \in S^k$ with $\Cpl{X_i}\le N_i$ we have $\varphi(X_1,...,X_k)\le N$. 
     \item We say that a function $\Psi \colon S \to \Salg$ is tame if the composition 
     \[
     S \oto{\Psi} \Salg \oto{\Cpl{-}} \RR_{\ge 0} 
     \]
     is tame. 
 \end{enumerate}
     
\end{defn}
We shall need several concrete examples of tame functions, that we shall now present.
\begin{prop}\label{controlled_functions_generic} We have the following tame functions:
\begin{enumerate}
    \item The functions $(X,Y)\mapsto X\cap Y$ and $(X,Y)\mapsto X\cup Y$, defined on pairs of semi-algebraic sets in the same Euclidean space, are tame.
    \item The function $X\mapsto \RR^n \setminus X$ defined on arbitrary semialgebraic sets is tame.
    \item Let $S$ be the collection of all tuples $(X,Y,Z,f)$  consisting of a semi-algebraic correspondence $f\colon X\to Y$ between semi-algebraic sets and a semi-algebraic subset $Z\subseteq X$. Then, the function $S\to \Salg$ given by 
    \[
    (X,Y,f,Z)\mapsto f(Z)
    \]
    is tame.
\end{enumerate}
\end{prop}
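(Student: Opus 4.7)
The plan is to treat the three parts in order of increasing subtlety: (1) and (2) are essentially combinatorial manipulations of the defining polynomial systems, while (3) reduces to a single hard statement, namely that coordinate projection is tame, which is an effective form of the Tarski-Seidenberg theorem.

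First I would handle (1). Write $X = \bigcup_{i\le N_1} X_i$ and $Y = \bigcup_{j\le N_2} Y_j$ with each $X_i$ being $N_1$-basic and each $Y_j$ being $N_2$-basic. Then $X\cup Y$ is already presented as a union of $N_1+N_2$ basic sets, all of them $\max(N_1,N_2)$-basic, so its complexity is bounded by a function of $N_1,N_2$. For intersection, distributivity gives $X\cap Y = \bigcup_{i,j}(X_i\cap Y_j)$, and each $X_i\cap Y_j$ is basic with polynomial list obtained by concatenating those of $X_i$ and $Y_j$; it is therefore $(N_1+N_2)$-basic, and the total complexity is at most $N_1 N_2(N_1+N_2)$.

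For (2), write $X=\bigcup_{i\le N} X_i$ with each $X_i$ being $N$-basic, so that $\RR^n\setminus X = \bigcap_i(\RR^n\setminus X_i)$. The complement of a single basic set is, by De Morgan, a union of at most $N$ sets of the form $\{g\ne 0\}$ or $\{g\le 0\}$, which split further into the elementary pieces $\{g>0\}$, $\{g<0\}$, $\{g=0\}$; each piece is clearly basic with complexity controlled by $N$. Thus the complement of a basic set has complexity bounded by a function of $N$, and tameness of finite intersections, already established in (1), supplies the desired bound on $\RR^n\setminus X$.

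Part (3) is where the real content lies. Writing $\Gamma_f\subseteq X\times Y$ for the graph of $f$, one has the identity
\[
f(Z) = \pi_Y\bigl(\Gamma_f \cap (Z\times Y)\bigr),
\]
where $\pi_Y\colon X\times Y\to Y$ is the coordinate projection. Taking a Cartesian product with $Y$ is clearly tame (it amounts to adjoining unconstrained coordinates), intersection is tame by (1), so it is enough to show that the coordinate projection $W\mapsto \pi_Y(W)$, defined on semi-algebraic subsets $W\subseteq \RR^{n+m}$, is a tame function. This is precisely an effective form of Tarski-Seidenberg: $\pi_Y(W)$ is the first-order definable set $\{y:\exists x,\,(x,y)\in W\}$, and effective quantifier elimination, available e.g.\ via cylindrical algebraic decomposition or via the explicit bounds in Basu-Pollack-Roy, produces a quantifier-free description of $\pi_Y(W)$ whose number, degree, and arity of defining polynomials are controlled by those of $W$. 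The main obstacle is thus not conceptual but consists in invoking the correct effective quantifier-elimination bound; the classical doubly-exponential bounds are wildly sufficient for tameness, since we only need \emph{some} bound depending on the input complexities.
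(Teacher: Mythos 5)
Your proposal is correct and follows essentially the same route as the paper: reduce to basic sets and count for (1), use De Morgan plus (1) for (2), and reduce (3) to tameness of coordinate projection, which is then handled by effective quantifier elimination (the paper cites \cite[Algorithm 14.5]{basu2algorithms}, i.e.\ the Basu--Pollack--Roy bounds you invoke). The only cosmetic difference is in (3): you write $f(Z)=\pi_Y(\Gamma_f\cap(Z\times Y))$ and observe that Cartesian product with $Y$ is tame, whereas the paper writes $f(Z)=\pi_2(\pi_1^{-1}(Z)\cap\Gamma_f)$ and checks that preimage under a linear projection is tame; these are the same computation.
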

Note that $(3)$ shows, in particular, that the formation of image and preimage under a semi-algebraic morphism is tame.
\begin{proof}
For $(1)$, note that if $X=C_1\cup \dots \cup C_\ell$ and $Y=C_1' \cup \dots \cup C_k'$ where the $C_i$-s and $C_j'$-s are $N$-basic, then 
\[
X\cup Y = C_1 \cup \dots \cup C_\ell \cup C_1' \cup \dots \cup C_k'
\]
is a union of at most $2N$ sets, each of which is $N$-basic, and hence it is of complexity $\le 2N$. Similarly, since $C_i \cap C_j'$ is $2N$-basic, the set 
\[
X\cap Y = \bigcup_{i,j} C_i \cap C_j'
\]
is of complexity at most $\max\{2N,N^2\}$. 

For $(2)$, assume that $X$ is of complexity $N$, and write as usual $X=C_1\cup \dots \cup C_k$ where the $C_i$-s are $N$-basic and $k\le N$. Then $\RR^n \setminus X = \bigcap_{i=1}^k \RR^n \setminus C_i$, and hence by $(1)$ it would suffice to show the claim when $X$ is $N$-basic, say 
\[
X= \{x\in \RR^n : g_1(x) = 0,\dots,g_m(x)=0,g_{m+1}(x)>0,\dots,g_N(x)>0\}.
\]
Then 
\[
\RR^n\setminus X = \cup_{i=1}^m \{x\in \RR^n : g_i(x) \ne 0\} \cup \bigcup_{i= m+1}^{N} \{x \in \RR^n : g_i(x) \le 0\}. 
\]
Hence, by $(1)$, it would suffice to bound the complexity of each of the summands in the above by a function of $N$. 
But 
\[
\{x\in \RR^n : g_i(x) \ne 0\} = \{x\in \RR^n : g_i(x) > 0\} \cup \{x\in \RR^n : g_i(x) < 0\}
\]
is of complexity $\le 2N$ and 
\[
\{x \in \RR^n : g_i(x) \le 0\} = \{x \in \RR^n : g_i(x) = 0\} \cup \{x \in \RR^n : g_i(x) < 0\} 
\]
is of complexity $\le 2N$ as well.

Finally, for $(3)$, assume that $X$,$Y$, $f$ and $Z$ are all of complexity $\le N$. We may assume without loss of generality that $X,Y\subseteq \RR^N$. Let $\Gamma_f$ be the graph of $f$. Then, 
\[
f(Z) = \pi_2(\pi_1^{-1}(Z) \cap \Gamma_f) 
\]
where $\pi_i\colon \RR^{2n}\simeq \RR^n\times \RR^n \to \RR^n$ denote the projection on the $i$-th coordinate for $i=1,2$. 
Hence, by $(1)$, it suffices to show the formation of image and preimage of semialgebraic sets along the projection $\pi\colon \RR^{2N} \to \RR^N$ are tame.
Note that if $C$ is a semi-algebraic set in $\RR^N$ given by a set of polynomial equations and inequalities 
\[
C = \{g_1(x)=0,\dots,g_\ell(x)=0,g_{\ell+1}(x)>0,\dots,g_N(x) >0\}
\] 
then
\[
\pi^{-1}(C) = \{g_1(f(x))=0,\dots,g_\ell(f(x))=0,g_{\ell+1}(f(x))>0,\dots,g_N(f(x)) >0\}
\] 
Since, for $\pi$ a linear projection, the polynomial $g_i(\pi(x))=0$ is of the same degree as $g_i$, we see that the complexity of $\pi^{-1}(Z)$ is at most twice the complexity of $Z$. 

Finally, the tameness of the function $Z\mapsto \pi(Z)$ in this case is a consequence of the efectivity of quantifier elimination in semi-algebraic geometry, see \cite[Algorithm 14.5]{basu2algorithms}. 
\end{proof}

Informally, point $(3)$ allows us to show that every function of semi-algebraic sets defined by a first order formula is tame. For our application, we only need the following special case.

\begin{lem}\label{tameness_formula}
Let $S$ be the collection of all triples $(X,Y,T)$ where $X,Y$ are semi-algebraic sets and $T\subseteq X\times Y$ is a semi-algebraic subset. The functions $S\to \Salg$ given by 
\[
(X,Y,T)\mapsto \{x\in X : \exists y\in Y, (x,y)\in T\}
\]
and
\[
(X,Y,T)\mapsto \{x\in X : \forall y\in Y, (x,y)\in T\}
\]
are tame.
\end{lem}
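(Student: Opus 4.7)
The plan is to reduce both claims to the tameness of image formation, intersection, union, and complement, which are established in Proposition \ref{controlled_functions_generic}. I first observe that the product operation $(X,Y)\mapsto X\times Y$ is itself tame: writing $X\subseteq \RR^n$ and $Y\subseteq \RR^m$, the identity $X\times Y = \pi_1^{-1}(X)\cap \pi_2^{-1}(Y)$ in $\RR^{n+m}$ expresses the product using only preimages under linear projections (tame by part (3), applied to the inverse correspondences of the coordinate projections) and intersection (tame by part (1)). Likewise, the graph of the first projection $\pi_X\colon X\times Y\to X$ is cut out in $(X\times Y)\times X$ by the linear equations equating the appropriate coordinates, so its complexity is bounded by a function of the complexities of $X$ and $Y$ alone.

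For the existential function, the tautology
\[
\{x\in X : \exists y\in Y,\,(x,y)\in T\} = \pi_X(T)
\]
reduces tameness to part (3) of the proposition applied to the tuple $(X\times Y, X, \pi_X, T)$, all of whose complexities have just been bounded in terms of the complexities of $X$, $Y$, and $T$.

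For the universal function, De Morgan's law yields
\[
\{x\in X : \forall y\in Y,\,(x,y)\in T\} = X \setminus \pi_X\bigl((X\times Y)\setminus T\bigr),
\]
and tameness follows by chaining the previous step with the tameness of complement (part (2)) and of set-difference $A\setminus B = A\cap(\RR^n\setminus B)$ (parts (1) and (2)).

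I do not expect any substantial obstacle; the only point requiring care is to notice that the ambient Euclidean dimension of a semi-algebraic set is built into the definition of its complexity, so the ambient dimensions $n,m$ entering the product and projection constructions above are automatically controlled by the input complexities, and Proposition \ref{controlled_functions_generic} applies without further bookkeeping.
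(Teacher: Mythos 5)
Your proof is correct and follows essentially the same route as the paper: the existential quantifier is handled as the image of $T$ under the projection to $X$, and the universal quantifier by De Morgan's law as the complement of the projection of $(X\times Y)\setminus T$, both then reduced to Proposition~\ref{controlled_functions_generic}. You simply make explicit some intermediate bookkeeping (tameness of forming products $X\times Y$ and of the projection correspondence) that the paper leaves implicit, which is a reasonable addition but not a different argument.
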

\begin{proof}
The first set is the image of $T$ under the projection $T\to X$, and the second is the complement of the projection of $X\times Y \setminus T$. Hence, the result follows from
\Cref{controlled_functions_generic}.
\end{proof}
As an immediate application, we can show the tameness of the formation of tangent bundle.
\begin{prop}\label{tangent_tame}
The function $X\mapsto TX$, defined on the collection of embedded affine Nash manifolds, is tame. Here, if $X\subseteq\RR^n$ we regard the tangent bundle $TX$ as a semi-algebraic subset of $\RR^{2n}$. 
\end{prop}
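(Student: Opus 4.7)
The plan is to express $TX$ by a first-order formula in the language of ordered fields with $X$ as a parameter, and then read tameness off \Cref{tameness_formula} and \Cref{controlled_functions_generic}. Since any Nash manifold is in particular $C^1$-smooth, for each $x \in X$ the tangent space $T_xX$ coincides with the Bouligand tangent cone at $x$, i.e.\ the set of vectors $v = \lim_k t_k(y_k - x)$ with $y_k \in X$, $y_k \to x$ and $t_k > 0$. This classical identification lets me replace the intrinsic differential-geometric definition of $TX$ by a purely metric one.

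Concretely, I claim that $(x,v) \in TX$ if and only if $x \in X$ and for every $\epsilon > 0$ there exist $y \in X$ and $s > 0$ with
\[
|y - x|^2 < \epsilon^2 \quad\text{and}\quad |y - x - s v|^2 < s^2\epsilon^2.
\]
Both implications are short. If the condition holds, pick $\epsilon_k \to 0^+$ with witnesses $(y_k,s_k)$: then $y_k \to x$ and $s_k^{-1}(y_k - x) \to v$, placing $v$ in the tangent cone. Conversely, if $t_k(y_k - x) \to v$ with $y_k \to x$ in $X$, then the pairs $(y_k, t_k^{-1})$ satisfy the displayed inequalities for any prescribed $\epsilon$ once $k$ is large enough.

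With this characterization in hand, the tameness of $X \mapsto TX$ is assembled in three steps of controlled complexity. Start with
\[
T_0 \;=\; \{(x,v,y,s,\epsilon) : x,y\in X,\ s>0,\ \epsilon>0,\ |y-x|^2<\epsilon^2,\ |y-x-sv|^2<s^2\epsilon^2\},
\]
whose complexity is controlled by $\Cpl{X}$ via \Cref{controlled_functions_generic}(1), since the additional constraints are polynomial (in)equalities of degree at most four. Apply \Cref{tameness_formula} once to existentially project out $(y,s)$, producing a tame $T_1$; apply it again to universally quantify over $\epsilon>0$, producing a tame $T_2$. Finally intersect $T_2$ with $X \times \RR^n$ to enforce $x \in X$, which is again tame by \Cref{controlled_functions_generic}(1); the resulting set is precisely $TX$.

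The only non-mechanical part of the plan is the equivalence between $TX$ and the displayed first-order condition; once that equivalence is granted, the tameness assertion is a direct book-keeping exercise using the preceding lemmas. The main point to watch will be keeping $x$ restricted to $X$ rather than to its Euclidean closure, so that the tangent cone one computes really coincides with the linear tangent space — which is where smoothness of $X$ is actually used.
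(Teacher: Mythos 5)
Your proof is correct and follows essentially the same route as the paper's: describe $TX$ by a first-order formula and then apply \Cref{tameness_formula} (together with \Cref{controlled_functions_generic}) to eliminate quantifiers. The only difference is cosmetic — the paper uses an intermediate-tangent-cone formula with quantifier pattern $\forall\exists\forall\exists$, whereas you use the Bouligand tangent-cone characterization with the shorter pattern $\forall\exists$; both coincide with $T_xX$ for a $C^1$ submanifold, so the formulas are interchangeable.
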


\begin{proof}
The tangent space of $X$ can be defined by the following formula:
\[
TX = \{(x,v)\in X\times \RR^n:  \forall \varepsilon > 0 \quad \exists \delta >0 \quad \forall 0<t<\varepsilon \quad \exists z\in X \text{ such that } |x+tv - z| < t\varepsilon\}.
\]
Applying \Cref{tameness_formula} four times to eliminate the $\forall$ and $\exists$ quatifiers, we get the result.
\end{proof}


For an embedding $X\subseteq Y$ of Nash manifolds, we let $N(X;Y)$ be the normal bundle of $X$ in $Y$. This is a Nash vector bundle on $X$.  
\begin{corl}\label{normal_tame}
The function that takes a Nash submanifold $X\subseteq \RR^n$ to the normal bundle $N(X;\RR^n)\subseteq \RR^{2n}$ is tame.   
\end{corl}

\begin{proof}
The normal bundle is the image of the tangent bundle $TX$ under the correspondence 
\[
\RR^{2n} \stackrel{(x,u,v)\mapsto (x,u)}{\longleftarrow} \{(x,v,u) \in \RR^{3n}: <v,u>=0\} \stackrel{(x,u,v) \mapsto (x,v)}{\longrightarrow} \RR^{2n},
\]  
Hence, the result follows from the tameness of image under a correspondence (\Cref{controlled_functions_generic}(3)) and the tameness of the formation of tangent bundles (\Cref{tangent_tame}).
\end{proof}

\begin{corl}\label{differential_tame}
The function that takes a Nash morphism $f\colon X\to Y$ of embedded Nash manfolds to the differential $df\colon TX\to TY$, is tame. 
\end{corl}

\begin{proof}
The graph of $df$ is the tangent space of the graph of $f$. Hence, the result follows from \Cref{tangent_tame}. 
\end{proof}

\subsubsection{Avoidance for Sets with Bounded Complexity}
Our main usage of the notion of complexity is the fact that semi-algebraic sets of small compexity can always be avoided by a member of a large enough finite set. We now state this precisely and prove the relevant avoidance result. First, we show that a semialgebraic set is contained in a hypersurface of controlled degree.

\begin{prop}\label{degree_pol_vanishing_controlled}
For a semi-algebraic set $X\subseteq \RR^n$ of dimension $\le n-1$, let $d(X)$ be the minimal degree of a polynomial $g(x_1,...,x_n)$ with real coefficients, such that $g|_X = 0$. Then, the function $X\mapsto d(X)$ is tame.
\end{prop}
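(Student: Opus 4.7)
The plan is to bound $d(X)$ by a function of $\Cpl{X}$ alone by combining two observations: unions translate to products of vanishing polynomials, and a non-empty basic piece of codimension at least one must involve at least one equation among its defining conditions.

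First I would reduce to the basic case. Given $X$ of complexity $\le N$ with $\dim X \le n-1$, write $X = C_1 \cup \dots \cup C_k$ with $k \le N$ and each $C_i$ an $N$-basic set. Since the dimension of a finite union is the maximum of the dimensions of its parts, each $C_i$ has $\dim C_i \le n-1$. If, for each non-empty $C_i$, we can produce a polynomial $h_i$ of degree $\le D(N)$ with $h_i|_{C_i}=0$, then the product $g := \prod_{i: C_i \ne \emptyset} h_i$ has degree at most $k\cdot D(N) \le N\cdot D(N)$ and vanishes on $X$. Thus the result for $N$-basic sets with bound $D(N)$ yields tameness with bound $N\cdot D(N)$ in general.

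Next I would handle the basic case. Let $C$ be a non-empty $N$-basic set of dimension $\le n-1$, presented as
\[
C = \{x \in \RR^n : g_1(x)=0,\dots,g_m(x)=0, g_{m+1}(x)>0,\dots,g_N(x)>0\},
\]
with $\deg g_i \le N$. I claim $m\ge 1$: if $m=0$ then $C$ is defined by strict inequalities only, hence is open in $\RR^n$, and being non-empty it would have dimension $n$, contradicting $\dim C \le n-1$. Consequently $g_1$ is a non-zero polynomial of degree $\le N$ vanishing identically on $C$, giving $d(C)\le N$. Plugging this into the previous paragraph yields $d(X)\le N^2$, proving tameness.

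There is no serious obstacle; the only care required is the edge case $m=0$, which is ruled out precisely by the dimension hypothesis, and the empty components, which are simply dropped from the product. The polynomial $g$ produced is in $\RR[x_1,\dots,x_n]$ because each $g_i$ in the presentation is real by assumption, so the bound $d(X)\le N^2$ is realized by a polynomial of the required type.
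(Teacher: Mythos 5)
Your proof is correct and follows essentially the same route as the paper: decompose $X$ into at most $N$ basic pieces, observe that a non-empty piece of dimension $<n$ cannot be cut out by strict inequalities alone (else it would be open), extract a nonzero vanishing polynomial of degree $\le N$ for each piece, and multiply. One tiny wrinkle worth noting: the inference from $m\ge 1$ to ``$g_1$ is a \emph{non-zero} polynomial'' is not automatic, since a presentation could list trivial equalities with $g_i\equiv 0$; the safe formulation (which the paper uses) is that $C$ cannot be described by strict inequalities together with only vacuous equalities, so at least one $g_i$ with $i\le m$ must be a nonzero polynomial.
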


\begin{proof}
Let $X$ be a semi-algebraic set in $\RR^n$ of dimension $\le n-1$ and complexity $\le N$. By the definition of complexity, we may write $X=\bigcup_{i=1}^\ell C_i$
where $C_i$ is given by polynomial equations and inequalities of degree at most $N$.
First, observe that if $C_i$ is non-empty, then it can not be expressed using strict polynomial inequalities only. Indeed, otherwise $C_i$ must be open, and this cotradicts the assumption that $\dim(X)<n$. 
Hence, for each $i$ there is a polynomial $g_i$ of degree at most $N$ such that $(g_i)|_{C_i} = 0$. But then $g := \prod_i g_i$ is a polynomial of degree $\le N^N$ such that $g|_X = 0$. This shows that $d(X)\le N^N$.
\end{proof}
We now turn to our avoidance result.

\begin{blueprop}\label{avoid_small_complexity}
For every $n,N\in \NN$ there is a finite subset $S\subseteq \RR^n$ satisfying the following property: 
For every $X\subseteq \RR^n$ of complexity $\le N$ and dimension less than $n$, we have $S\nsubseteq X$.
\end{blueprop}

\begin{proof}
By \Cref{degree_pol_vanishing_controlled}, there is $d>0$, independent of $X$, and a polynomial $g\ne 0$ of degree at most $d$ such that $g|_X = 0$. Hence, it would suffice to show that there is a finite set $S$ which is not entirely contained in the zero set of any nonzero polynomial function of degree $\le d$ on $\RR^n$. In fact, every set of size ${n+1+d \choose d}$ in general position, that is, such that evaluations at the points of $S$ define independent functionals of the linear space of polynomials of degree at most $d$ in $n$ variables, satisfies this condition.
\end{proof}

\subsection{Focal Points}\label{sec:focal.points}

Recall that, for a smooth map $f\colon M_0 \to M_1$ of smooth manifolds, a \tdef{singular value} of $f$ is a point $x\in M_1$ such that there is $y\in f^{-1}(x)$ for which the differential $d_y f\colon T_yM_0 \to T_x M_1$ is not surjective.

\begin{prop} \label{singular_values_tame}
The function that takes a triple $(X,Y,f\colon X\to Y)$ of embedded Nash manifolds and a Nash morphism between them to the set of singular values of $f$ is tame. 
\end{prop}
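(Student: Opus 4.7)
My plan is to reduce this to the tameness results already established---for tangent bundles (\Cref{tangent_tame}), differentials (\Cref{differential_tame}), formation of images (\Cref{controlled_functions_generic}(3)), and first-order formulas (\Cref{tameness_formula}). The key observation is that the set of singular values of $f$ equals the image $f(C(f))$, where $C(f)\subseteq X$ is the critical locus of $f$, and $C(f)$ itself admits a description by a first-order formula in $TX$, $TY$, and the graph of $df$.

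Concretely, assuming $X \subseteq \RR^n$ and $Y \subseteq \RR^m$, a point $y \in X$ lies in $C(f)$ precisely when $d_y f$ fails to surject onto $T_{f(y)} Y$; this happens if and only if
\[
\exists w \in \RR^m :\ (f(y),w) \in TY\ \text{ and }\ \forall v \in \RR^n,\ \bigl[(y,v) \in TX \Rightarrow ((y,v),(f(y),w)) \notin \operatorname{graph}(df)\bigr].
\]
Unwound, the formula says that there is a tangent vector $w$ to $Y$ at $f(y)$ that is not of the form $d_y f(v)$ for any $v\in T_y X$, which is exactly the failure of surjectivity. By \Cref{tangent_tame} and \Cref{differential_tame}, the sets $TX$, $TY$, and $\operatorname{graph}(df)$ depend tamely on $(X,Y,f)$. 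Two applications of \Cref{tameness_formula} (one to eliminate the inner $\forall$, one for the outer $\exists$), combined with the tameness of Boolean operations (\Cref{controlled_functions_generic}(1),(2)), show that $(X,Y,f) \mapsto C(f)$ is tame. Finally, \Cref{controlled_functions_generic}(3) upgrades this to tameness of $(X,Y,f) \mapsto f(C(f))$, which is the set of singular values.

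I do not anticipate a substantive obstacle here: every step reduces to a tameness property already established. The only care required is in phrasing ``$d_y f$ is not surjective'' as a first-order formula whose quantifiers range over Euclidean space, with the membership clauses $(y,v)\in TX$ and $(f(y),w)\in TY$ folded into the body so that \Cref{tameness_formula} applies directly.
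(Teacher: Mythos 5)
Your proof is correct and follows essentially the same route as the paper: express the singular locus by a first-order formula whose ingredients are the tame objects $TX$, $TY$, and $\operatorname{graph}(df)$, and then invoke \Cref{tangent_tame}, \Cref{differential_tame}, \Cref{tameness_formula}, and \Cref{controlled_functions_generic}. The only cosmetic difference is that you first describe the critical locus $C(f)\subseteq X$ and then take its image $f(C(f))$, whereas the paper writes a formula directly for the subset of $Y$; these are interchangeable by \Cref{controlled_functions_generic}(3). Your formulation is in fact the more careful of the two: by quantifying $\forall v$ over the graph of $df$ at the fixed source point $y$, you express ``$d_y f$ is not surjective'' exactly, avoiding the pitfall of comparing against the global image $df(TX)$ (which would conflate contributions from different points of the fibre $f^{-1}(f(y))$). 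The paper's displayed formula, as printed, has a typo along exactly these lines (``$d_xf(v)\in TY\setminus df(TX)$'' is vacuous since $d_xf(v)$ always lies in $df(TX)$), so your version can be read as the intended, corrected statement.
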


\begin{proof}
The set of singular values of $f$ is given by the formula 

\[
\{y\in Y : \exists (x,v) \in X \times T_y Y | f(x)=y \text{ and } d_xf(v) \in TY\setminus df(TX)\}. 
\]
Hence, the result follows from the tameness of the formation of differential, tangent bundle and complement, as well as the tameness of the set of solutions to such a formula (see \Cref{tameness_formula}, \Cref{tangent_tame},  \Cref{differential_tame} and \Cref{controlled_functions_generic}(2)).
\end{proof}

For a smooth submanifold $M\subseteq \RR^n$,
we have a canonical map 
\[
\nu_M \colon N(M;\RR^n) \to \RR^n, 
\]
\[
(x,v)\mapsto x+v. 
\]
\begin{defn}\label{def:foc}
Let $M\subseteq \RR^n$ be a (not necessarily closed) smooth submanifold of $\RR^n$. We say that a point $p\in \RR^n$ is a \tdef{focal point} of $M$ if it is a singular value of $\nu_M$.
We denote by $\mdef{\Foc(M)}\subseteq \RR^n$ the set of focal points of $M$.
\end{defn}

Geometrically, a focal point of $M$ is a center of a sphere which is tangent to $M$ at a point ``twice''.
We start our analysis of focal points with the following dimension estimate.
\begin{prop} \label{dim_bound_foc_bin}
Let $X\subseteq \RR^n$ be a Nash submanifold of $\RR^n$ of dimension $\le n-1$. Then $\Foc(X)$ is a semi-algebraic subset of $\RR^n$ of dimension at most $n-1$. 
\end{prop}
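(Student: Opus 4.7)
The plan is to invoke Sard's theorem applied to the endpoint map $\nu_M \colon N(M;\RR^n) \to \RR^n$, combined with the tameness results already established to guarantee semi-algebraicity.

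First, I would verify that $\Foc(M)$ is semi-algebraic. By \Cref{normal_tame}, the normal bundle $N(M;\RR^n)$ is a semi-algebraic subset of $\RR^{2n}$. The map $\nu_M$ is the restriction of the polynomial map $(x,v)\mapsto x+v$, hence a Nash morphism, so in particular its graph is semi-algebraic. By \Cref{singular_values_tame}, the set of singular values of $\nu_M$ is then semi-algebraic; this is precisely $\Foc(M)$.

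For the dimension bound, the key observation is that both the source and the target of $\nu_M$ have dimension $n$. Indeed, if $\dim M = k \le n-1$, then the fibers of $N(M;\RR^n) \to M$ are $(n-k)$-dimensional vector spaces, so $\dim N(M;\RR^n) = k + (n-k) = n$. Applying Sard's theorem to the smooth map $\nu_M$ between $n$-dimensional manifolds, the set $\Foc(M)$ of critical values has Lebesgue measure zero in $\RR^n$. Since a semi-algebraic subset of $\RR^n$ of dimension $n$ necessarily contains a non-empty open subset and hence has positive measure, we conclude $\dim \Foc(M) \le n-1$.

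I do not foresee a serious obstacle: the argument is a routine combination of Sard's theorem with the semi-algebraic dimension/measure dichotomy. The one point worth checking carefully is that the dimensions of the source and target of $\nu_M$ agree, which is what makes Sard's conclusion strong enough — if we only knew the weaker bound $\dim \Foc(M) \le \dim N(M;\RR^n)$, we would learn nothing, and the dimension hypothesis $\dim M \le n-1$ would not have been used.
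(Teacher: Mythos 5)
Your proof is correct and takes essentially the same approach as the paper: apply Sard's theorem to the endpoint map $\nu_M \colon N(M;\RR^n)\to\RR^n$. The only difference is that the paper directly cites the semi-algebraic version of Sard's lemma, whereas you unpack it into classical Sard plus the standard fact that a semi-algebraic subset of $\RR^n$ of dimension $n$ contains a nonempty open set and hence has positive measure — this unpacking is valid and is, in effect, how one proves the semi-algebraic Sard lemma. (Your route to semi-algebraicity via the tameness lemmas is also fine, though heavier than simply invoking Tarski--Seidenberg as the paper does.)

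One side remark in your writeup is inaccurate: the hypothesis $\dim M \le n-1$ is \emph{not} what makes $\dim N(M;\RR^n) = n$ hold. For any $k$-dimensional submanifold of $\RR^n$ one has $\dim N(M;\RR^n) = k + (n-k) = n$, including when $k = n$; in that case the normal bundle has rank $0$, $\nu_M$ is an open embedding, and $\Foc(M)=\emptyset$. So the dimension hypothesis is actually never used in the argument (and the statement remains true without it). This does not affect the validity of your proof, only the commentary accompanying it.
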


\begin{proof}
The fact that $\Foc(X)$ is a semi-algebraic subset of $\RR^n$ follow from the Seidenberg-Tarski Theorem. We turn to estimate its dimension. The map $\nu_X\colon N(X;\RR^n)\to \RR^n$ is a Nash morphism of Nash manifolds, so by the semi-algebraic version of Sard's Lemma (see \cite[Theorem 4.8]{coste2000introduction}), its set of singular values $\Foc(X)$ is of dimension $\le n-1$.
\end{proof}

Our next task is to bound the complexity of the semi-algebraic set $\Foc(X)$ in terms of the complexity of $X$.

\begin{blueprop} \label{focal_points_tame}
The function that takes a embedded Nash manifold $X\subseteq \RR^n$ of dimension $\le n-1$ to the set $\Foc(X)$ is tame. 
\end{blueprop}

\begin{proof}
First, by \Cref{normal_tame}, the function that takes $X$ to $N(X;\RR^n)$ is tame. Also, the formation of the map $\nu_M\colon N(X;\RR^n) \to \RR^n$ is tame, as it is the restriction of a fixed semi-algebraic morphism to $N(X;\RR^n)$. Finally, the formation of the set $\Foc(X)$ of singular values of $\nu_M$ is tame by \Cref{singular_values_tame}.
\end{proof}

\begin{corl} \label{bin_foc_avoidance}
For every $n\in \NN$ and every
$N>0$ there is a finite set $S\subseteq \RR^n$ such that, for every Nash submanifold $X\subseteq \RR^n$ of dimension $\le n-1$ and complexity $\le N$, there is $s\in S$ which in not a focal point of $X$.  
\end{corl}

\begin{proof}
By \Cref{focal_points_tame}, there is $N'>0$ such that the set of focal points of $X$ is of complexity $\le N'$. The existence of $S$ now follows from \Cref{avoid_small_complexity}. 
\end{proof}

\section{Equivalence Relations} 
In this section we discuss the general theory of equivalence relations on Nash manifolds and their quotients. Our goal is to give a practical criterion for the existence of a quotient of a Nash manifold by a Nash equivalence relation (\Cref{prop:slis.reg}).

\subsection{Abstract Equivalence Relations}
We start by discussing equivalence relation in a general categorical context.  
\begin{defn}
Let $\cC$ be a category. A pair of morphisms $R\rightrightarrows X$ exhibits $R$ as an \tdef{equivalence relation} on $X$ if, for every $Y\in \cC$, the corresponding map of sets $\Map(Y,R)\to \Map(Y,X)\times \Map(Y,X)$ is an injection and defines an equivalence relation on $\Map(Y,X)$. 
\end{defn}
If $\cC$ admits finite products, the pair of maps can be encapsulated into a single monomorphism $i\colon R\into X\times X$.
In good situations, one can form the quotient of $X$ by the equivalence relation $R$. 

\begin{defn}
Let $\cC$ be a category, and let $ R\rightrightarrows X$ be an equivalence relation. We define the \tdef{quotient} of $X$ by $R$ to be the coequalizer of the corresponding diagram
\[
\begin{tikzcd}
 {R} & {X,}
	\arrow[shift right=1, from=1-1, to=1-2]
	\arrow[shift left=1, from=1-1, to=1-2]
\end{tikzcd}
\]
when this coequalizer exists.
We denote in this case the quotient by $\mdef{X/R}$.
\end{defn}
 
 Note that, in general, the quotient $X/R$ may not exist if $\cC$ does not have all coequalizers. We will show that, under certain assumptions on the relation, the quotient of a Nash manifold by a Nash equivalence relation exists in $\Nash$.

\subsection{Restricted $\RR$-Spaces and Their Quotients}
Nash manifolds fully faithfuly embed in a larger collection of finitary geometric objects called restricted $\RR$-spaces.
\begin{defn}
A \tdef{restricted $\RR$-space} is a restricted topological space $X$ endowed with a sheaf of $\RR$-valued functions $\sO_X$.  
\end{defn}
Every Nash manifold, with its sheaf of smooth semialgebraic functions, is a restricted $\RR$-space. We denote the category of Nash manifolds by $\Nash$. 
While the quotient of a Nash manifold by an equivalence relation might not exist in $\Nash$, it does exist as a restricted $\RR$-space. More generally, we have the following:
\begin{prop}
The category of restricted $\RR$-spaces has all small colimits. In particular, every equivalence relation on a restricted $\RR$-space admits a quotient.
\end{prop}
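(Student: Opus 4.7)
The plan is the standard two-stage construction of colimits in categories of geometric objects equipped with a sheaf of functions: first construct the colimit on the underlying restricted topological space, then equip it with the universal sheaf of $\RR$-valued functions. Since restricted $\RR$-spaces admit a forgetful functor to restricted topological spaces, the second step is essentially formal once the first is settled.

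First I would build the colimit of a small diagram $F\colon I\to \mathrm{RestrRSp}$ at the level of restricted topological spaces. The underlying set is the colimit of the underlying sets (coproduct modulo the evident relation generated by the transition maps), and a subset $U$ is declared restricted-open iff $\pi_i^{-1}(U)$ is open in $F(i)$ for every $i\in I$, where $\pi_i\colon F(i)\to |X|$ is the structure map. I need to check that this family is in fact a restricted topology, i.e.\ is closed under \emph{finite} unions and intersections; this is immediate because preimages commute with finite unions and intersections, and each $F(i)$ has a restricted topology closed under these operations. The universal property in restricted topological spaces is then a routine check.

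Second I would equip $|X|$ with the sheaf $\sO_X$ defined by
\[
\sO_X(U) \;=\; \bigl\{\, f\colon U\to\RR \;:\; f\circ \pi_i \in \sO_{F(i)}(\pi_i^{-1}(U)) \text{ for every } i\in I\,\bigr\}.
\]
Verifying the sheaf condition on the restricted site of $|X|$ reduces to the sheaf condition on each $\sO_{F(i)}$, using that if $\{U_\alpha\}$ is a finite cover of $U$ then $\{\pi_i^{-1}(U_\alpha)\}$ is a finite cover of $\pi_i^{-1}(U)$. To obtain the universal property, let $Y$ be a restricted $\RR$-space with a cocone $\{\varphi_i\colon F(i)\to Y\}$. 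The underlying restricted topological space of $Y$ receives a unique continuous map $\varphi\colon |X|\to Y$ from the previous step, and for every restricted open $V\subseteq Y$ and $g\in \sO_Y(V)$, the pullback $g\circ\varphi$ satisfies $(g\circ\varphi)\circ \pi_i = g\circ \varphi_i\in \sO_{F(i)}$ by assumption, hence $g\circ\varphi \in \sO_X(\varphi^{-1}(V))$ by the very definition of $\sO_X$. Thus $\varphi$ is a morphism of restricted $\RR$-spaces and it is clearly the unique one compatible with the cocone.

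The ``in particular'' clause then follows by applying this to the parallel pair $R\rightrightarrows X$. The main subtlety I would keep an eye on is the finiteness built into the restricted setting: I want to make sure that in the definition of the topology on $|X|$ I never implicitly take arbitrary unions and that the sheaf condition is checked only against the restricted (finite) covers. Once one is careful with this bookkeeping, nothing in the construction is obstructed, because all the relevant operations (preimages, intersections, finite unions, sheafification with respect to a restricted site) interact well with the finiteness constraint. I do not expect any genuine obstacle beyond this bookkeeping.
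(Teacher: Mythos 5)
Your construction coincides with the paper's proof: both take the set-theoretic colimit, equip it with the final restricted topology (opens are those with open preimages under the structure maps), and define $\sO_X(U)$ as the set of functions whose pullbacks to each $X_i$ are sections. You simply spell out the universal-property and sheaf-condition checks that the paper leaves implicit, and your remark about respecting the finiteness built into restricted covers is the right point to be careful about.
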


\begin{proof} 
Let $\{(X_i,\sO_{X_i})\}_{i\in I}$ be a diagram of restricted $\RR$-spaces. We define the restricted topological space $X$ as the set-theoretic colimit of the sets $\{X_i\}_{i\in I}$. Let $\varphi_i\colon X_i \to X$ be the structure morphism to the colimit. We endow $X$ with the restricted topology for which the open sets are the subsets $U\subseteq X$ for which $\varphi_i^{-1}(U)$ is open for every $i\in I$. 
We equipt $X$ with the sheaf of functions $\sO_X$ such that a function $f\colon U\to \RR$ belongs to $\sO_X(U)$ if and only if, for every $i\in I$, the composition $f\circ \varphi_i|_{\varphi_i^{-1}(U)}$ lies in $\sO_{X_i}(\varphi_i^{-1}(U))$. 
It is easy to see that $(X,\sO_X)$ is a colimit for the diagram $(X_i,\sO_{X_i})$.
\end{proof} 

Thus, given a Nash manifold $X$ with an equivalence relation $R$, the question whether $X/R$ exists in $\Nash$ translates to the question whether the restricted $\RR$-space $X/R$ belongs to the full subcategory $\Nash$.


\subsection{Equivalence Relations on Nash Manifolds}
Given a Nash manifold $X$ endowed with an equivalence relation $R$, we would like to show that, under suitable conditions, $X/R$ is a Nash manifold and the quotient map $q\colon X \to X/R$ is a Nash submersion. Assuming this, the implicit function theorem guarantees the existence of local sections to $q$. This observation motivates the notion of \emph{slice} discussed below. Namely, when the quotient do exists, a slice is the image of such a local section. Eventually, we use this notion in the proof of our main theorem.   
\subsubsection{Slices to Equivalence Relations on Smooth Manifolds}\label{sssec:sm} 
We start by discussing equivalence relations and their slices from a purely differential geometric perspective.      
We will impose certain conditions on our equivalence relations.
 
\begin{defn}
$R$ is \tdef{embedded}, and indicate this by writing $R\subseteq X\times X$ rather than specifying the monomorphism $i$. We further say that $R$ is \tdef{closed} if $R$ is a closed submanifold of $X\times X$.
\end{defn}

\begin{defn}
An equivalence relation $R\subseteq X\times X$ on a smooth manifold $X$ is called \tdef{submersive} if the composition  $R\subseteq X\times X \to  X$ is a submersion, where the second map is the projection to the first factor.
\end{defn}

For $x\in X$, let $R_x$ be the equivalence class of $x$ in $X$.
If $R$ is submersive then $R_x$ is a smooth submanifold of $X$ for every $x\in X$. 
A slice for $R$ is then a subset of $X$ which is transversal to all the submanifolds $R_x$ in the following sense:

\begin{defn}
Let $X$ be a smooth manifold and let $R\subseteq X\times X$ be a submersive equivalence relation on $X$. A locally closed submanifold $Z\subseteq X$ is called a \tdef{slice} for $R$, if for every $x\in Z$ we have $Z\cap R_x = \{x\}$ and $T_x X = T_xR_x \oplus T_x Z$. 
\end{defn}

An atlas of $X/R$ is then composed of slices of $R$ which cover the quotient. The latter property can be detected on $X$ itself using the saturation of the slices. 
\begin{defn}
Let $X$ be a set endowed with an equivalence relation $R$. For $Z\subseteq X$ we denote 
\[
\Sat(Z) = \bigcup_{x\in Z} R_x,
\] 
and refer to $\Sat(Z)$ as the \tdef{saturation} of $Z$ with respect to $R$. 
\end{defn}

\begin{prop} \label{slice_iso_smooth_functions}
Let $X$ be a smooth manifold and let $R$ be a submersive equivalence relation on $X$. Let $Z\subseteq X$ be a slice for $R$. Then 
\begin{enumerate}[$(1)$]
    \item The subset $U:=\Sat(Z)$ is open in $X$. 
    \item Let $f\colon U\to \RR$ be a function which is constant on equivalence classes of $R$. Then $f$ is smooth if and only if $f|_Z$ is smooth.
\end{enumerate}
\end{prop}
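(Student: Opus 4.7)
The plan is to treat both projections $p_1, p_2 \colon R \to X$ on equal footing---the swap involution of $X\times X$ preserves $R$ by symmetry of the equivalence relation and intertwines $p_1$ with $p_2$, so both are submersions---and to introduce the ``incidence submanifold'' $W := p_2^{-1}(Z) \subseteq R$. The goal is to show that $p_1|_W \colon W \to X$ is itself a submersion with image $\Sat(Z)$. Both conclusions then fall out formally: (1) because submersions are open maps, and (2) because submersions admit local smooth sections.

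First I would observe that $W$ is a smooth submanifold of $R$ with $T_{(x,z)} W = T_{(x,z)} R \cap (T_x X \oplus T_z Z)$, and that $p_1(W) = \Sat(Z)$ tautologically. The heart of the argument is showing that $dp_1 \colon T_{(x,z)} W \to T_x X$ is surjective at every $(x,z) \in W$. Given $v \in T_x X$, submersivity of $p_1 \colon R \to X$ produces a lift $(v,u) \in T_{(x,z)} R$ with $u \in T_z X$. The slice splitting at $z \in Z$ yields $u = u_1 + u_2$ with $u_1 \in T_z R_z$ and $u_2 \in T_z Z$; since $R_x = R_z$, the entire ``horizontal fiber'' $\{x\} \times R_z$ lies in $R$, so $(0, u_1) \in T_{(x,z)} R$, whence $(v, u_2) = (v,u) - (0, u_1) \in T_{(x,z)} W$ and $dp_1(v, u_2) = v$. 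Thus $p_1|_W$ is a submersion, hence an open map, and $\Sat(Z) = p_1(W)$ is open in $X$, which proves (1).

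For (2), only the ``if'' direction requires work, since restriction of a smooth function to a submanifold is smooth. Given $x_0 \in U$, the submersion theorem supplies a neighborhood $V \subseteq U$ of $x_0$ and a smooth local section $s \colon V \to W$ of $p_1|_W$. Writing $s(x) = (x, \zeta(x))$ with $\zeta := p_2 \circ s \colon V \to Z$ smooth, the relation $(x, \zeta(x)) \in R$ forces $x \sim \zeta(x)$, and $R$-invariance of $f$ gives $f(x) = (f|_Z)(\zeta(x))$ on $V$, a composition of smooth maps, hence smooth near $x_0$. The main obstacle is really the linear-algebraic core of (1): one must simultaneously exploit the slice decomposition $T_z X = T_z R_z \oplus T_z Z$ and the presence of the full ``fiber'' $\{x\} \times R_z$ inside $R$ in order to adjust an arbitrary lift of $v$ into one that is actually tangent to $W$.
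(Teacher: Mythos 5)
Your proof is correct, and it takes a genuinely different route from the paper's. The paper first invokes the Bourbaki theorem (\cite[\S 5.9.5]{bourbaki2007varietes}) to obtain the quotient smooth manifold structure on $X/R$ together with the submersion $\pi\colon X\to X/R$, then shows that $\pi|_Z$ is an injective local diffeomorphism, hence an open embedding; both (1) and (2) are then read off from the diffeomorphism $Z\iso\pi(Z)$. You instead avoid the quotient manifold altogether and work inside $R$: you form the incidence submanifold $W=p_2^{-1}(Z)\subseteq R$ (legitimate since $p_2$ is a submersion, by the swap symmetry), and reduce both conclusions to the single linear-algebraic fact that $p_1|_W\colon W\to X$ is a submersion with image $\Sat(Z)$, proved by correcting an arbitrary lift $(v,u)\in T_{(x,z)}R$ of $v$ by an element of $\{0\}\times T_zR_z\subseteq T_{(x,z)}R$ coming from the fiber $\{x\}\times R_z\subset R$. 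This is more self-contained, and it has the pleasant byproduct that your argument for (1) and (2) never uses the injectivity half of the slice condition ($Z\cap R_x=\{x\}$), only the transversality half $T_zX=T_zR_z\oplus T_zZ$; so your proof actually establishes the proposition for arbitrary \'etale slices, whereas the paper's route to the open-embedding statement leans on injectivity of $\pi|_Z$. On the other hand, the paper's approach yields the stronger conclusion directly needed in \Cref{slice_open_immersion_quotient}---that $\pi|_Z$ is an open embedding---as part of the same argument, so nothing is lost by their choice in context.
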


\begin{proof}
By \cite[\S5.9.5]{bourbaki2007varietes} there is a unique smooth manifold structure on the topological space  $X/R$ for which the quotient map $\pi:X\to X/R$ is a submersion. Since $Z$ is a slice to $R$, the map $\pi|_Z\colon Z\to X/R$ is injective. Also, since for any $x\in X$ the map $\varphi$ is constant on $R_x$, we obtain that $d_x\varphi(T_x R_x)=0$. This together with the assumption that $T_xZ \oplus T_x R_x = T_x X$, implies that 
\[
d_x(\varphi|_Z)\colon T_x Z \to T_{\pi(x)} X/R
\] 
is an isomorphism. We see that $\varphi|_Z$ is \'{e}tale and injective, and hence an open embedding of smooth manifolds. 

Now, for (1), we have $\Sat(Z) = \pi^{-1}(\pi(Z))$ which is open by the continuity of $\pi$ and the fact that $\pi(Z)$ is open in $X/R$. For $(2)$, the functions on $U$ which are constant on equivalence classes are precisely the functions on $\pi(Z)$, so we get the result from the fact that $\pi$ induces a diffeomorphism $Z\iso \pi(Z)$.  
\end{proof}

\subsubsection{Slices to Equivalence Relations on Nash Manifolds}
We now specialize to the case of Nash manifolds, and show how semialgebraic slices give semialgebraic local models for the quotient space $X/R$. 
\begin{defn}
We say that a morphism of restricted $\RR$-spaces $f\colon X\to Y$ is an \tdef{open embedding} if it is injective and open as a morphism of restricted topological spaces, and if the morphism $f^*\sO_Y \to \sO_X$ is an isomorphism of sheaves of $\RR$-valued functions on $X$.
\end{defn}

\begin{prop} \label{slice_open_immersion_quotient}
Let $X$ be a Nash manifold, and let $R \subseteq X\times X$ be a submersive equivalence relation on $X$ in $\Nash$. Let $Z\subseteq X$ be a semialgebraic slice for $R$. Then, the map $\pi|_Z\colon Z\to X/R$ is an open embedding of restricted $\RR$-spaces. 
\end{prop}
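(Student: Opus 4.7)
The plan is to verify in turn the three properties defining an open embedding of restricted $\RR$-spaces: injectivity of $\pi|_Z$, openness as a morphism of restricted topological spaces, and the induced isomorphism on structure sheaves. Much of the work is supplied by the smooth analogue \Cref{slice_iso_smooth_functions}; the only genuinely new ingredient is checking that a certain canonical extension of a Nash function remains semi-algebraic.

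Injectivity is immediate from the slice property, since two distinct points $z_1,z_2 \in Z$ with $z_1 \sim z_2$ would violate $Z \cap R_{z_1} = \{z_1\}$. For openness, I would observe that the conditions defining a slice pass to any open subset $W \subseteq Z$, so $W$ is itself a slice for $R$. Then \Cref{slice_iso_smooth_functions}(1) says $\Sat(W) = \pi^{-1}(\pi(W))$ is open in $X$, and by the definition of the restricted topology on $X/R$ this means $\pi(W)$ is open in $X/R$; in particular $\pi(Z)$ is open.

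The main step is to show that for every open $V \subseteq \pi(Z)$ with preimage $W = (\pi|_Z)^{-1}(V)$, the pullback $\sO_{X/R}(V) \to \sO_Z(W)$ is a bijection. Injectivity of this pullback is formal, because $\pi|_Z \colon W \to V$ is already a set-theoretic bijection. For surjectivity, given $g \in \sO_Z(W)$, I would define $\tilde{g}$ on $\Sat(W) = \pi^{-1}(V)$ by $\tilde{g}(x) := g(z)$, where $z$ is the unique element of $W$ with $(x,z) \in R$; uniqueness is the slice condition once more, and smoothness together with $R$-invariance is supplied by \Cref{slice_iso_smooth_functions}(2). The one point that genuinely goes beyond the smooth case, and which I expect to be the only real obstacle, is semi-algebraicity of $\tilde{g}$. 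I would obtain this from the Tarski--Seidenberg theorem applied to the first-order description
\[
\{(x,y) \in \Sat(W)\times\RR \,:\, \exists z \in W,\ (x,z)\in R \text{ and } g(z)=y\}
\]
of the graph of $\tilde{g}$. Once this is in hand, $\tilde{g}$ is Nash and $R$-invariant, so it descends to an element of $\sO_{X/R}(V)$ pulling back to $g$, completing the verification of the sheaf isomorphism.
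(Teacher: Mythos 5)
Your proposal is correct and follows essentially the same route as the paper: injectivity from the slice condition, openness via \Cref{slice_iso_smooth_functions}(1) applied to open subsets of $Z$, and the sheaf isomorphism via \Cref{slice_iso_smooth_functions}(2) for smoothness combined with Tarski--Seidenberg for semi-algebraicity. Your explicit first-order description of the graph of $\tilde{g}$ is just a spelled-out version of the paper's appeal to Seidenberg--Tarski, and the one slightly loose phrase (crediting $R$-invariance to \Cref{slice_iso_smooth_functions}(2), when it is really immediate from the construction of $\tilde{g}$) does not affect the argument.
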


\begin{proof}
Let $\pi \colon X \to X/R$ be the quotient map.
First, by the assumption that $Z\cap R_x = \{x\}$ for every $x\in Z$, the map $\pi|_Z \colon Z\to X/R$ is injective. Next, if $U\subseteq Z$ is open, then $U$ is itself a slice of $R$, so that by part $(1)$ of \Cref{slice_iso_smooth_functions} the set $\Sat(U)$ is open in $X$. Since $\pi$ is an open map, we deduce that $\pi(U)=\pi(\Sat(U))$ is open in $X/R$. 
Hence, $\pi|_{Z}$ is an open embedding of restricted topological spaces. 

It remains to show that $\pi|_Z$ identifies the sheaf of functions on $Z$ with the sheaf of functions on its (open) image in $X/R$. By the definition of $\sO_{X/R}$, this amounts to show the following: 
\begin{itemize}
    \item[(*)] For every open set $U\subseteq \pi(Z)$, a function $g\colon \pi^{-1}(U)\to \RR$ which is constant on equivalence classes is smooth and semi-algebraic, if and only if its restriction $g|_Z$ is smooth and semi-algebraic.
\end{itemize}
The equivalence of the semi-algebraic property follows from the Seidenberg-Tarski Theorem, and the equivalence of the smoothness condition is \Cref{slice_iso_smooth_functions}(2). 
\end{proof}

In practice, it is usually easier to construct subsets which are slices only locally. This leads to the following relaxation of the notion of a slice. 

\begin{defn}\label{def:slice}
Let $X$ be a smooth manifold and let $R\subseteq X\times X$ be a submersive equivalence relation on $X$. A locally closed submanifold $Z\subseteq X$ is called an \tdef{\'{e}tale slice} for $R$ if it satisfies only the second condition in the definition of a slice, namely, if for every $x\in Z$ we have $T_x Z \oplus T_x R_x = T_xX$.  
\end{defn}

We now show that in the case of a semi-algebraic equivalence relation, the difference between \'{e}tale slices and slices disappears after passing to a finite open cover.

\begin{prop}\label{etale_slices_Zariski_slices}
Let $X$ be a Nash manifold and let $R\subseteq X\times X$ be a closed submersive equivalence relation on $X$ in $\Nash$. Let $Z\subseteq X$ be a semi-algebraic \'{e}tale slice for $R$. Then there are finitely many slices $Z_i\subseteq Z$ such that 
\[
\Sat(Z_1) \cup \dots \cup \Sat(Z_\ell) = \Sat(Z). 
\]
\end{prop}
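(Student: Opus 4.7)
The plan is to verify that $R|_Z := R\cap (Z\times Z)$ is a closed Nash submanifold of $Z\times Z$ over which the first projection $p_1\colon R|_Z\to Z$ is étale, and then use this structure together with semi-algebraic finiteness to exhibit finitely many open sub-slices of $Z$ that cover it.

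For the étale structure, I check at $(z,z')\in R|_Z$ that $R$ is transverse to $Z\times Z$ in $X\times X$. The submersivity of $p_1\colon R\to X$ and its symmetric counterpart imply that $T_{(z,z')}R$ contains $T_zR_z\oplus T_{z'}R_z$ as a subspace of $T_zX\oplus T_{z'}X$; combined with the étale-slice identities $T_zR_z + T_zZ = T_zX$ and $T_{z'}R_z + T_{z'}Z = T_{z'}X$, this yields the transversality. A dimension count gives $\dim R|_Z=\dim Z$, and the direct-sum decomposition $T_{z'}Z\oplus T_{z'}R_z = T_{z'}X$ forces $dp_1$ to be injective on $T_{(z,z')}R|_Z$, hence an isomorphism, so $p_1$ is étale. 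Since $Z$ is Hausdorff, the diagonal $\Delta_Z\subseteq R|_Z$ is clopen, and therefore $R|_Z^\circ := R|_Z\setminus \Delta_Z$ is a closed semi-algebraic subset of $Z\times Z$.

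From this, for each $z\in Z$ the closed set $R|_Z^\circ$ avoids the diagonal point $(z,z)$, so there is an open semi-algebraic neighborhood of $(z,z)$ of product form $U_z\times U_z$ disjoint from $R|_Z^\circ$. The open set $U_z\subseteq Z$ is then a slice containing $z$, since no two of its distinct points are $R$-equivalent and the tangent transversality is inherited from $Z$. This gives an open cover of $Z$ by semi-algebraic slices, indexed by points of $Z$.

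To extract a finite subcover, I invoke Hardt's semi-algebraic triviality theorem for the étale map $p_1\colon R|_Z\to Z$, which provides a finite partition $Z=\bigsqcup T_\alpha$ into locally closed semi-algebraic pieces over each of which $p_1$ trivializes. In particular the fiber cardinality is uniformly bounded by some integer $N$, and on each $T_\alpha$ one obtains finitely many semi-algebraic sections $\tau_{\alpha,j}\colon T_\alpha\to Z$ parametrizing the non-diagonal elements of the equivalence classes. Refining by a semi-algebraic cell decomposition compatible with the graphs of the $\tau_{\alpha,j}$, one constructs finitely many open semi-algebraic sub-slices $Z_1,\dots,Z_\ell\subseteq Z$ whose union covers $Z$; their saturations then automatically cover $\Sat(Z)$. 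The main obstacle lies precisely in this last step: a semi-algebraic manifold is not compact in general, so a finite subcover of the open cover $\{U_z\}$ is not automatic. The essential inputs are the uniform fiber bound from Hardt's theorem together with the cell decomposition compatibility, which reduce the problem to a finite combinatorial-geometric partition governed by the sections $\tau_{\alpha,j}$.
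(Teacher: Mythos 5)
Your first two-thirds of the argument is sound and follows the same opening move as the paper: showing, by the transversality computation, that $R|_Z=R\cap(Z\times Z)$ is a closed Nash submanifold of $Z\times Z$ over which $p_1$ is étale, and then that $R|_Z^\circ=R|_Z\setminus\Delta_Z$ is closed, so each $z\in Z$ has an open box slice $U_z$. (Small wording quibble: $\Delta_Z$ is closed because $Z$ is Hausdorff, but it is \emph{open} in $R|_Z$ because $p_1$ is étale and the diagonal is a section of $p_1$; Hausdorffness alone does not give clopen.) The paper then simply cites the proof of \cite[Proposition 2.57]{bakker2018minimal} for the passage from this local structure to finitely many slices.

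The gap is exactly where you flag it, and the appeal to ``a semi-algebraic cell decomposition compatible with the graphs of the $\tau_{\alpha,j}$'' does not close it. Hardt triviality gives you a finite partition $Z=\bigsqcup T_\alpha$, a uniform fiber bound $N$, and continuous semi-algebraic sections $\tau_{\alpha,j}\colon T_\alpha\to Z$. But these are sections with target $Z$, not $T_\alpha$, and for a given cell $C$ of any refinement it can perfectly well happen that both $z$ and $\tau_{\alpha,j}(z)$ lie in $C$ for some $z$. Compatibility of the cell decomposition with the graphs $\Gamma_{\tau_{\alpha,j}}\subseteq Z\times Z$ means those graphs are unions of cells of a product decomposition; it does not mean the source point and the target point are placed in distinct cells of $Z$. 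What you actually need is a finite partition $Z=\bigsqcup S_\beta$ with $(S_\beta\times S_\beta)\cap R|_Z^\circ=\emptyset$, and producing it requires a genuine additional argument — typically an induction on $\dim Z$ combined with the equivalence-relation structure (so that iterates $\tau^i(z)$ stay in the fiber $R_z\cap Z$ of size $\le N$ and must exhaust or escape after at most $N$ steps), or some other dynamical/recursive bookkeeping. As written, the final paragraph asserts that such a partition exists ``by cell decomposition compatibility'' without exhibiting the mechanism, so the proof is incomplete at precisely the step the proposition is about.
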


\begin{proof}
The condition that $Z$ is an \'{e}tale slice for $R$ implies that the restricted equivalence relation $R|_Z:= R\cap Z\times Z$ is a closed \`{e}tale equivalence relation on $Z$, that is, that the projection $R|_Z \to Z$ is an \'{e}tale map. Thus, the existence of the $Z_i$-s follows as in the proof of \cite[Proposition 2.57]{bakker2018minimal}.
\end{proof}

\subsubsection{Regular Equivalence Relations}
 We shall use the following terminology regarding equivalence relations for which a ``well behaved" quotient exists: 
\begin{defn}
Let $X$ be a Nash manifold, and let $R\subseteq X\times X$ be a submersive equivalence relation on $X$ in $\Nash$. We say that $R$ is \tdef{regular} if the restricted $\RR$-space $X/R$ is a Nash manifold and the quotient map $X\to X/R$ is a Nash submersion.
\end{defn}
\begin{rmk}
Note that if $R$ is a closed, regular equivalence relation on a Nash manifold $X$, then we have a Nash diffeomorphism $R\simeq X\times_{X/R} X$. Indeed, both are closed semialgebraic submanifolds on $X\times X$ with the same set of points. 
\end{rmk}

\begin{rmk}
If $R$ is a regular equivalence relation on $X$, then the quotient $X/R$ of $X$ by $R$ in restricted $\RR$-spaces is actually also the quotient in $\Nash$. Indeed, this is a formal consequence of $\Nash$ being a full subcategory of the category of restricted $\RR$-spaces.
\end{rmk}

We finally give our criterion for an submersive equivalence relation to be regular. 
\begin{blueprop}\label{prop:slis.reg}
Let $R\subseteq X\times X$ be a submersive equivalence relation on a Nash manifold $X$. Then $R$ is regular if and only if  there exists a finite collection of \'{e}tale semi-algebraic slices $Z_1,...,Z_N$ of $R$ for which 
\[
X = \bigcup_{i=1}^N \Sat(Z_i).
\]
\end{blueprop}

\begin{proof}Let $\pi \colon X\to X/R$ be the quotient map.
\begin{itemize}
    \item Assume first that $R$ is regular, so that the quotient map $\pi$ is a Nash submersion.
By \cite[Theorem 2.4.16]{aizenbud2010rham}, there is a finite cover $X/R=\bigcup_{i=1}^N U_i$ and a collection $s_i:U_i\to X$ of smooth, semialgebraic sections of $\pi$. Then, the subsets 
\[
Z_i:=s_i(U_i)\subseteq X
\] 
form a collection of \'{e}tale slices for which $\bigcup_{i=1}^N \Sat(Z_i) = X$. Indeed, the fact that $Z_i$ is an \'{e}tale slice for $R$ follows from the assumption that $s_i$ is a smooth semiaglebraic section of $\pi$. Since $\Sat(Z_i) = \pi^{-1}(U_i)$, the fact that the open sets $\Sat(Z_i)$ cover $X$ follows from the assumption that the $U_i$-s cover $X/R$. 

\item 
Conversely, assume that there are \'{e}tale slices $Z_i$ of $R$, for which 
\[
X = \bigcup_{i=1}^N \Sat(Z_i).
\]
We wish to show that $\pi$ is a Nash submersion of Nash manifolds. 

\begin{enumerate}[Step 1.] 
\item We show that the restricted $\RR$-space $X/R$ is a Nash manifold:\\
By \Cref{etale_slices_Zariski_slices}, we may assume without loss of generality that each $Z_i$ is a slice for $R$, by replacing each $Z_i$ by a finite collection of slices for $R$. 

By \Cref{slice_open_immersion_quotient}, the map $\pi|_{Z_i}\colon Z_i \to X/R$ is an open embedding. Since the sets $\Sat(Z_i)$ cover $X$, the images of the $Z_i$-s under $\pi$ cover $X/R$. We deduce that $X/R$ can be covered by the (finitely many) Nash manifolds $\pi(Z_i)$, and hence it is a Nash manifold.

\item We show that $\pi \colon X\to X/R$ is a Nash submersion: \\
This claim depends only on the smooth manifolds $X$ and $X/R$, and not on the semialgebraic structure. Hence, the result follows from the fact that every closed submersive equivalence relation on a smooth manifold is regular (see \cite[\S 5.9.5]{bourbaki2007varietes}). 
    \end{enumerate}
 
\end{itemize}

\end{proof}

\subsubsection{Slices of Normals}
We have seen in the previous section that constructing a quotient for a submersive equivalence relation on a Nash manifold amounts to giving a large enough collection of \'{e}tale slices. In this section we give a construction of such slices for equivalence relations on $\RR^n$. Namely,  
for a submersive, semialgebraic equivalence relation $R \subseteq \RR^n \times \RR^n$, we shall construct a family of \'{e}tale slices $V(R,p)$, one for each point $p\in \RR^n$. We then show that these slices supply a finite cover of $\RR^n/R$ and use this to put on it a Nash structure.
\begin{defn}\label{defn:VRp}
Let $R$ be a closed submersive equivalence relation on $\RR^n$. For $p\in \RR^n$, let $\mdef{V(R,p)}$ be the set of points $x\in \RR^n$ satisfying the following two properties: 
\begin{itemize}
    \item The vector $p-x$ is perpendicular to the linear subspace $T_xR_x \subseteq \RR^n$. Equivalently, we have $(x,x-p)\in N_x(R_x;\RR^n)$. 
    
    \item $(x,x-p)$ is a regular point of the map $\nu_{R_x}\colon N(R_x;\RR^n) \to \RR^n$. 
\end{itemize}
\end{defn}
The sets $V(R,p)$ are useful for us because of the following:

\begin{blueprop}\label{normals_slice_is_slice}
Let $R \subseteq \RR^n\times \RR^n$ be a submersive closed equivalence relation. Then, for every $p\in \RR^n$, the subset $V(R,p)\subseteq \RR^n$ is an \'{e}tale slice for $R$, which is semi-algebraic if $R$ is semi-algebraic.
\end{blueprop}

\begin{proof}
By the Seidenberg-Tarski theorem, if $R$ is semi-algebraic then so is $V(R,p)$. Hence, it remains to show that:
\begin{enumerate}[(1)]
    \item\label{prop:slice:1} $V(R,p)$  is a locally closed smooth submanifold of $\RR^n$.
    \item\label{prop:slice:2} $T_xV(R,p)\oplus T_xR_x = T_x\RR^n$ for every $x\in V(R,p)$.
\end{enumerate}
\begin{itemize}
\item[Proof of (\ref{prop:slice:1}):] Let $\varphi:E\to \RR^n$ be the vector bundle given by $E_{x} = N_x(R_x;\RR^n)$. We have a map $\nu\colon E \to \RR^n$ given by $\nu(x,v) = x+v$. 
Let $s\colon \RR^n \to \RR^{2n}$ be the smooth map given by $s(x) = (x,x-p)$, and set 
$
Z:= s(V(R,p))
$ 
Then, $Z$ is an open subset of $\nu^{-1}(p)\subseteq E$, consisting of only regular points of $\nu$. Indeed, at a point $(x,v)\in Z$, even the restriction of $\nu$ to $\varphi^{-1}(R_x) = N_x(R_x;\RR^n)$, which is exactly $\nu_{R_x}$, is regular at $(x,v)$. 
By the Implicit Function Theorem, we deduce that $Z$ is a smooth submanifold of $E$. Since $Z$ is the image of $V(R,p)$ under a smooth section of the projection $\RR^{2n}\to \RR^n$, we deduce that $V(R,p)$ is itself a smooth submanifold of $\RR^n$.

\item[Proof of (\ref{prop:slice:2}):] 
Let $x\in V(R,p)$, and set $\tilde{x} = (x,x-p) \in E$. Since the submersion $\varphi \colon E\to \RR^n$ restricts to a diffeomorphism $Z\iso V(R,p)$, the condition 
\[
T_{x}V(R,p) \oplus T_x R_x = T_x \RR^n
\] 
is equivalent to the condition \[
T_{\tilde{x}}Z \oplus T_{\tilde{x}} \varphi^{-1}(R_x) = T_{\tilde{x}} E.
\] 
Since $Z$ is an open neighborhood of $\tilde{x}$ in $\nu^{-1}(p)$ and $\tilde{x}$ is a regular point of $\nu$, we have 
\[
T_{\tilde{x}}Z = \Ker(d_{\tilde{x}}\nu).
\]
Hence, to show that $T_{\tilde{x}} \varphi^{-1}(R_x)$ is a direct complement of $T_{\tilde{x}}Z$, it would suffice to show that the differential $d_{\tilde{x}}\nu$ restricts to an isomorphism 
\[
T_{\tilde{x}} \varphi^{-1}(R_x) \iso T_p \RR^n.  
\] 
In other words, we have to show that the map $\nu|_{\varphi^{-1}(R_x)}\colon \varphi^{-1}(R_x) \to \RR^n$ is \'{e}tale at $\tilde{x}$. By the definitions of $E$ and $\nu$, we have 
\[
\varphi^{-1}(R_x) = N(R_x;\RR^n)
\] 
and 
\[
\nu|_{\varphi^{-1}(R_x)} = \nu_{R_x}.
\] 
Finally, $\nu_{R_x}$ is \'{e}tale at $\tilde{x}$ since the point $\tilde{x}$ is a regular point of $\nu_{R_x}$.
\end{itemize}
\end{proof}

To use the \'{e}tale slices $V(R,p)$ as charts of the quotient $\RR^n/R$, we need to show that finitely many of them cover this quotient. Namely:

\begin{blueprop}\label{enough_slices_Rn}
Let $R\subseteq \RR^n\times \RR^n$ be a closed, submersive and semialgebraic equivalence relation on $\RR^n$. Then, there is a finite set of points $p_1,...,p_N$ in $\RR^n$ such that 
\[
\RR^n = \bigcup_{i=1}^N \Sat(V(p_i,R)).
\]
\end{blueprop}

\begin{proof}
For $p,x\in \RR^n$, if $p$ is not a focal point of $R_x$ then $x$ is equivalent to a point of $V(R,p)$. Indeed, since the equivalence class $R_x\subset \RR^n$ is closed, there is a point $y\in R_x$ which is nearest to $p$ within $R_x$. Then, $y-p$ is perpendicular to $R_x$ at $y$, and $p$ is not a focal point of $R_x$. We deduce that $x\sim_R y\in V(R,p)$. 

Consequently, we have
\[
X\setminus \Sat(V(R,p)) \subseteq \{x\in \RR^n : p\in \Foc(R_x)\}.
\]
Hence, to prove the result, it would suffice to find a finite set $S\subseteq \RR^n$ such that for every $x\in \RR^n$ there is $p\in S$ for which $p\notin \Foc(R_x)$.

By \Cref{avoid_small_complexity} and \Cref{focal_points_tame}, the existence of such $S$ will follow if we show that there is $N>0$ such that  $R_x$ is of complexity $\le N$ for every $x\in \RR^n$. 
But $R_x$ is the intersection of $R$ with an affine subspace of $\RR^{2n}$, and hence, by  \Cref{controlled_functions_generic}(1), the complexity of $R_x$ can be bounded from above by a function of the complexity of $R$ . 
\end{proof}

\subsubsection{Regularity of Closed Equivalence Relations}\label{sssec:reg.rel}
We are ready to prove the main result of the paper:

\begin{thm}\label{thm:closed_submersive_then_regular}
Let $X$ be a Nash manifold and let $R\subseteq X\times X$ be a closed, submersive and semialgebraic equivalence relation on $X$. Then $R$ is regular. 
In other words, the quotient $X/R$ is a Nash manifold and the quotient map $X\to X/R$ is a Nash submersion.
\end{thm}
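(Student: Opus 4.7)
The plan is to invoke the criterion \Cref{prop:slis.reg}: it suffices to exhibit a finite collection of étale semi-algebraic slices $Z_1,\dots,Z_N$ of $R$ whose saturations exhaust $X$.

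I would first dispose of the case $X=\RR^n$, where the machinery already developed does essentially all the work. By \Cref{enough_slices_Rn} there exist finitely many points $p_1,\dots,p_N\in\RR^n$ with $\RR^n=\bigcup_{i=1}^N \Sat(V(R,p_i))$, and by \Cref{normals_slice_is_slice} each $V(R,p_i)$ is a semi-algebraic étale slice for $R$. Feeding this collection into \Cref{prop:slis.reg} yields the regularity of $R$ in the Euclidean case.

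For a general Nash manifold $X$, I would reduce to the Euclidean case via a finite affine cover. By the definition of a Nash manifold, $X$ admits a finite open cover $X=U_1\cup\dots\cup U_k$ by affine Nash submanifolds, each Nash-embeddable as a closed Nash submanifold of some $\RR^{n_j}$. Since every $U_j$ is open in $X$, the tangent space identifies with that of $X$ at each point, so any étale slice for $R$ contained in $U_j$ is automatically an étale slice for $R$ on $X$. Thus it is enough to produce, for each $j$, a finite collection of étale semi-algebraic slices contained in $U_j$ whose saturations cover $U_j$ modulo $R$; their union over $j$ then covers $X$ modulo $R$ as required by \Cref{prop:slis.reg}.

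The main obstacle is carrying out this last step, because \Cref{defn:VRp}, \Cref{normals_slice_is_slice}, and \Cref{enough_slices_Rn} are stated only for equivalence relations on all of $\RR^n$. To bridge the gap, I would adapt the $V(R,p)$-construction to a closed Nash submanifold $U_j\subseteq\RR^{n_j}$, restricting the defining conditions on $x$ to lie in $U_j$ and using the normal bundle of $R_x$ inside $\RR^{n_j}$ intersected with $T_xU_j$ in the transversality argument. The analogues of \Cref{normals_slice_is_slice} and \Cref{enough_slices_Rn} should then follow from exactly the same ingredients: the tameness of the formation of tangent and normal bundles and of focal points (\Cref{tangent_tame}, \Cref{normal_tame}, \Cref{focal_points_tame}), together with the avoidance principle \Cref{avoid_small_complexity}. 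Closedness of $R$ in $X\times X$ and of $X$ in $\RR^{n_j}$ supplies existence of nearest points within equivalence classes, which is what the covering argument of \Cref{enough_slices_Rn} rests upon.
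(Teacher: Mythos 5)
Your overall plan matches the paper's: reduce to local models and invoke \Cref{prop:slis.reg} via the étale-slice machinery of \Cref{normals_slice_is_slice} and \Cref{enough_slices_Rn}. The case $X=\RR^n$ is handled identically.

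The divergence is in the reduction step, and this is where your proposal has a genuine gap. You cover $X$ by affine charts $U_j$, i.e., closed Nash submanifolds of Euclidean spaces $\RR^{n_j}$, and then propose to adapt the $V(R,p)$-construction and the focal-point analysis to the curved ambient manifold $U_j\subset\RR^{n_j}$. You correctly identify this as ``the main obstacle,'' but the adaptation you sketch is not carried out and is not innocuous: \Cref{defn:VRp}, \Cref{normals_slice_is_slice}, and \Cref{enough_slices_Rn} depend on the flat geometry of $\RR^n$ (the map $\nu(x,v)=x+v$, perpendicularity, nearest-point arguments, and the focal-point dimension bound), and reworking them for a submanifold of $\RR^{n_j}$ with the induced metric requires nontrivial modifications that you would need to verify from scratch.

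The paper sidesteps this entirely by invoking a stronger Nash-specific covering theorem of Shiota \cite[Corollary I.5.11 and Remark I.5.12]{Shiota}: every Nash manifold admits a \emph{finite} open cover $X=U_1\cup\dots\cup U_\ell$ in which each $U_i$ is Nash \emph{diffeomorphic} to $\RR^n$ (not merely embeddable in it). Transporting $R|_{U_i}$ along the diffeomorphism $U_i\simeq\RR^n$ yields a closed submersive semialgebraic equivalence relation on $\RR^n$ itself, to which the machinery applies verbatim; the slices so produced are automatically étale slices for $R$ on $X$ because $U_i$ is open in $X$, and their $R$-saturations cover $X$ because their $R|_{U_i}$-saturations cover $U_i$. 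This covering theorem is the key ingredient missing from your argument, and using it eliminates the need for any adaptation of the $V(R,p)$-construction.
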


\begin{proof}
By \Cref{prop:slis.reg}, it would suffice to find finitely many \'{e}tale slices for $R$ whose saturations cover $X$. By \cite[Corollary I.5.11 and Remark I.5.12]{Shiota}, there is a finite cover $X=U_1 \cup \dots \cup U_\ell$ such that $U_i\simeq \RR^n$. If $Z$ is an \'{e}tale slice for $R|_{U_i}$, then it is an \'{e}tale slice for $R$. Moreover, if we cover $U_i$ by saturation of such \'{e}tale slices, then the union of all the slices constructed for the different $U_i$-s will give a suitable collection of slices for $X$. Hence, we may replace $X$ by $U_i$ and assume without loss of generality that $X\simeq\RR^n$. 

In this case, by \Cref{enough_slices_Rn} there are finitely many points $p_1,...,p_N$ such that 
\[
X = \bigcup_{i=1}^N \Sat(V(R,p_i)).
\]
By \Cref{normals_slice_is_slice}, each $V(R,p_i)$ is an \'{e}tale slice for $R$, and hence the $V(R,p_i)$-s constitute the desired collection of slices.  
\end{proof}


\bibliographystyle{alpha}
\bibliography{ref}

\end{document}